\def\title{Hybridized Discontinuous Galerkin \par \medskip Method with Lifting Operator}
\def\abstract{
 \quad  In this paper, we propose a new hybridized discontinuous Galerkin method for the Poisson equation with
homogeneous Dirichlet boundary condition. Our method has the advantage that  
the stability is better than the previous hybridized method.
We derive $L^2$ and $H^1$  error estimates of optimal order.
Some numerical results are presented to verify our analysis.}
\def\keywords{discontinuous Galerkin method,  hybridized method, error analysis}
\newcounter{const}
\newtheorem{theorem}{Theorem}
\newtheorem{proposition}[theorem]{Proposition}
\newtheorem{remark}[theorem]{Remark}
\def\ang#1{\langle #1\rangle}
\def\|{\@ifnextchar|{|\!\!\>|\!\!\>}{|\!\!\;|}}
\def\figurename{\textrm{Fig.}}
\def\tablename{\textrm{Table}}
\begin{document}

\begin{center}
{\Large {\bf \title}}

\bigskip
\bigskip
{\large Issei Oikawa}

\bigskip

Graduate School of Mathematical Sciences, University of Tokyo \par
3-8-1 Komaba Meguro-ku Tokyo 153-8914, Japan

\bigskip
\bigskip

{\bf Abstract}
\end{center}
\vspace{-0.5em}
\abstract
\begin{flushleft}
{\bf Keywords.} \keywords
\end{flushleft}
\vspace{2em}

\section{Introduction}
The discontinuous Galerkin finite-element methods (DGFEMs) is one
of the active research fields of numerical analysis in the last decade.
They allow us to use discontinuous approximate functions across the
element boundaries and have the robustness to variation of element
geometry. That is, we can utilize many kind of polynomials as approximate
functions on elements and many kind of polyhedral domains
as elements simultaneously. Consequently, DGFEM fits adaptive computations,
so that mathematical analysis as well as actual applications
has been developed for various problems. For more details, we refer
to \cite{Brenner,Arnold01,Arnold02}. However, the size and band-widths of the resulting matrices
can be much larger than those of the conventional FEM, which is a
disadvantage from the viewpoint of computational cost. To surmount
this obstacle, recently new class of DGFEM, which is called hybridized
DGFEMs, is proposed and analyzed by B. Cockburn and his colleagues;
for example, see \cite{Cockburn01}. Thus, we introduce new unknown
function $\hat U_h$ on inter-element edges and characterize it as the weak solution
of a target PDE. We then obtain the discrete system for $\hat U_h$ and
the size of the system becomes smaller. On the other hand, it should
be kept in mind that DGFEM has another origin. Some class of nonconforming
and hybrid FEM's, which are called hybrid displacement
method, use discontinuous functions as approximate field functions;
see for example \cite{Pian, Tong}. In \cite{KA72a} and \cite{KA72b}, F. Kikuchi and Y.
Ando developed a variant of the hybrid displacement one, and applied
it to plate problems. Their approach enables one to use conventional
element matrices and vectors. It, however, suffered from numerical
instability and was not fully successful. Recently, the author and his
colleagues proposed a new DGFEM that is based on the hybrid displacement
approach by stabilizing their old method and applied it to
linear elasticity problems in \cite{Kikuchi01}. A key point of our method is to introduce
penalty terms in order to ensure the stability. We, then, carried
out theoretical analysis by using the 2D Poisson equation as a model
problem, and gave some concrete finite element models with numerical
results and observations in \cite{Oikawa01}. However, an issue still remains. The stability
is guaranteed only when the penalty parameters are taken from
a certain interval, and we know only the existence of such an interval
and do not know concrete information about it.

The purpose of this paper is to propose a new hybridized DGFEM that
is stable for arbitrary penalty parameters. Our strategy is to introduce
the lifting operator and define the penalty term in terms of the lifting
operator. In order to state our idea as clearly as possible, we consider
the Poisson equation with
homogeneous Dirichlet condition: 
\begin{equation} \label{modelproblem}
  -\Delta u = f \text{ in } \Omega, \quad
          u = 0 \text{ on } \partial \Omega,
\end{equation} 
where $\Omega$ is a convex polygonal domain and $f \in L^2(\Omega)$.

This paper is composed of six sections. In Section 2, we introduce the
triangulation and finite element spaces, and then describe the lifting
operator. Section 3 is devoted to the formulation of our proposed hybridized
DGFEM, and mathematical analysis including error estimates
is given in Section 4. In Section 5, we report some results of numerical
computations and confirm our theoretical results. Finally, we conclude
this paper in Section 6.

\section{Preliminaries}

\subsection{Notation}

Let $\Omega \subset \mathbb R^n$, for an integer $n \ge 2$, be a convex polygonal domain.
 We introduce a triangulation $\mathcal T_h  = \{ K \}$  of $\Omega$ in the sense \cite{Oikawa01}, 
where  $h = \max_{K \in \mathcal T_h} h_K$ and  $h_K$ stands for the diameter of $K$.
 That is each $K \in \mathcal T_h$ is an $m$-polygonal domain, 
where $m$ is an integer and can differ with $K$. 
 We assume that $m$ is bounded from above independently of 
a family of triangulations $\{\mathcal T_h\}_h,$ and $\partial K$ does not intersect with itself.
 Let $\mathcal E_h = \{e \subset \partial K : K \in \mathcal T_h\}$ be the set of all edges of elements, and
let $\Gamma_h = \bigcup_{K \in \mathcal T_h} \partial K$.
 We define  the so-called broken Sobolev space for $k \ge 0$, 
\[
     H^k(\mathcal T_h) = \{ v \in L^2(\Omega) : v|_K \in H^k(K) \quad  \forall K \in \mathcal T_h \}.
\]
Let $L^2_0(\Gamma_h)$ $=$  $\{ \hat v$ $\in$ $L^2(\Gamma_h)$ : $\hat v|_{\partial\Omega}$ $= 0$ \}. 
We introduce the inner products
\begin{align*}
     (u,v)_K &= \int_K uvdx  \quad  \text{ for } K \in \mathcal T_h, \\
     \langle \hat u, \hat v \rangle_e &= \int_e \hat u \hat v ds  \quad \text{ for } e \in \mathcal E_h.
\end{align*}
 The usual $m$-th order Sobolev seminorm and norm on  $K$ are denoted by $|u|_{m,K}$ and $\|u\|_{m,K}$, respectively.
We use  finite element spaces:
\[
      U_h   \subset H^2(\mathcal T_h) , \quad \hat U_h  \subset L^2_0(\Gamma_h). 
\]
In addition, we set $V_h = U_h \times \hat U_h$ and $V(h) =  H^2(\mathcal T_h) \times L^2_0(\Gamma_h)$.

\subsection{Lifting operators}
 We state the definition of the lifting operator which plays a crucial role
in our formulation and analysis.
 To this end, we fix $K \in \mathcal T_h$ and $e \subset \partial K$ for the time being, and set
\[
      U_h(K) = \{ w_h|_K : w_h \in U_h,  \
      \hat U_h(e) = \{ \hat w_h|_e : \hat w_h \in \hat U_h\}.
\]
Then, for any $\hat v \in L^2(e)$, there exists a unique $\mathbf u_h \in U_h(K)^n$ such that
\begin{equation} 
      (\mathbf u_h, \mathbf w_h)_K = \langle \hat v, \mathbf w_h \cdot \mathbf n_K \rangle_e,
       \ \forall \mathbf w_h \in U_h(K)^n,
\end{equation}
where $\mathbf n_K$ is the unit outward normal vector to $\partial K$. 
 The lifting operator $\mathbf L_{e,K}$ : $L^2(e) \rightarrow  U_h(K)^n$
is defined as $\mathbf L_{e,K}(\hat v) = \mathbf u_h $. 
 Thus,
\begin{equation} \label{lifting}
      (\mathbf L_{e,K}(\hat v), \mathbf w_h)_K = \langle \hat v, \mathbf w_h \cdot \mathbf n_K \rangle_e,
      \ \forall \mathbf w_h \in U_h(K)^n.
\end{equation}
Furthermore, we define $\mathbf L_{\partial K} = \sum_{e\subset \partial K} \mathbf L_{e,K}$.

\section{New hybridized DG scheme}
 This section is devoted to the presentation of our proposed hybridized DGFEM.
 Before doing so, we convert the Poisson problem \eqref{modelproblem} into a suitable weak form \eqref{w-form}. 
 A key idea is to introduce unknown functions on inter-element edges.
 First, multiplying both the sides of \eqref{modelproblem} by a test function $v \in U_h$ and
integrating over each $K \in \mathcal T_h$, we have by the integration by parts
\begin{equation} \label{HDG01}
     \sum_{K\in\mathcal T_h} \left[ (\nabla u, \nabla v)_K - \ang{\mathbf n_K \cdot \nabla u,v}_{\partial K} \right]
      = (f,v)
\end{equation}
 From the continuity of the flux, we have
\begin{equation} \label{conti-flux}
      \sum_{K \in \mathcal T_h} \ang{\mathbf n_K \cdot \nabla u, \hat v} = 0 \quad \forall \hat v \in L^2_0(\Gamma_h).
\end{equation}
This, together with \eqref{HDG01}, implies 
\begin{equation} \label{HDG02}
       \sum_{K\in\mathcal T_h} \left[ (\nabla u, \nabla v)_K - 
                                        \ang{\mathbf n_K \cdot \nabla u,v-\hat v}_{\partial K} \right]
              = (f,v)
\end{equation}
 Here we set, for $\mathbf u = (u,\hat u)$ and $\mathbf v = (v,\hat v) \in V(h)$,  
\begin{align*}
     a_h(\mathbf u, \mathbf v) &= \sum_{K\in\mathcal T_h} (\nabla u, \nabla v)_K, \\
     b_h(\mathbf u, \mathbf v) &= -\sum_{K\in\mathcal T_h} \ang{\mathbf n_K \cdot \nabla u, v-\hat v}_{\partial K}.
\end{align*}
Then, \eqref{HDG02} is rewritten as 
\begin{equation} \label{w-form}
     a_h(\mathbf u, \mathbf v) + b_h(\mathbf u, \mathbf v) = (f,v).  
\end{equation}
 Now we can state our hybridized DGFEM:
find $\mathbf u_h \in V_h$ such that
\begin{align} \label{HDGformulation}
       B^L_h(\mathbf u_h, \mathbf v_h)  
      & := a_h(\mathbf u_h, \mathbf v_h)
         + b_h(\mathbf u_h, \mathbf v_h)
         + b_h(\mathbf v_h, \mathbf u_h)
         + j_h(\mathbf u_h, \mathbf v_h)  \nonumber \\ 
      & \quad = (f,v_h) \quad \forall \mathbf v_h = (v_h, \hat v_h) \in V_h.
\end{align}

 Here, the third term $b_h(\mathbf v_h,  \mathbf u_h)$ of $B_h^L$ is 
added in order to symmetrize the scheme and the penalty term 
$j_h(\mathbf u_h, \mathbf v_h)$ is defined by
\begin{align*}
        j_h(\mathbf u, \mathbf v) =
     &  \sum_{K\in\mathcal T_h} \left( \mathbf L_{\partial K}(u - \hat u), \mathbf L_{\partial K}(v-\hat v) \right)_K \\
     & + \sum_{K \in\mathcal T_h}\sum_{e \subset \partial K} \int_e \eta_e h_e^{-1} (u - \hat u) (v - \hat v) ds,
\end{align*}
 with the penalty parameters  $\eta_e > 0$, where $h_e$ is the diameter of $e$.

\section{Error estimates}
 In this section, we give a mathematical analysis of our hybridized DGFEM.
 To this end, we introduce
\begin{align*}
      \|| \mathbf v \||^2  
      &= \sum_{K\in\mathcal T_h} \Bigg( \| \nabla v - \mathbf L_{\partial K}(v - \hat v)\|_{0,K}^2     
         + \sum_{e \subset \partial K} \frac{\eta_e}{h_e} \|v - \hat v\|^2_{0, e} \Bigg), \\
      \|| \mathbf v \||^2_h
       & = \sum_{K\in\mathcal T_h} \left( |  v |_{1,K}^2 + 
                                 \sum_{e \subset \partial K}  \frac{\eta_e}{h_e} \|v - \hat v\|^2_{0, e} \right),
\end{align*}
 where $\eta_e$ is a positive parameter for each $e \in \mathcal E_h$.

\begin{theorem} \label{thm:CBC}
 The bilinear form $B_h^L$ satisfies the following three properties.
\begin{description}
     \item[(Consistency)]
         Let  $ u \in H^2(\Omega) \cap H^1_0(\Omega)$
         be  the exact solution. For $\mathbf u = (u, u|_{\Gamma_h})$, we have
         \[
                B^L_h(\mathbf u, \mathbf v) = (f,v) \quad \forall \mathbf v \in V(h).
         \]
     \item[(Boundedness)] 
         \[
               |B^L_h(\mathbf v, \mathbf w)| \le \|| \mathbf v\|| \|| \mathbf w\||
                \quad \forall \mathbf v,\mathbf w \in V(h).
         \]
     \item[(Coercivity)] 
         \[
                B^L_h(\mathbf v_h, \mathbf v_h) \ge  \|| \mathbf v_h \||^2
                 \quad \forall \mathbf v_h \in V_h.
         \]
\end{description}
 Furthermore, the scheme \eqref{HDGformulation} admits a unique solution $\mathbf u_h \in V_h$
for any $f \in L^2(\Omega)$ and $\{\eta_e\}_e$. 
\end{theorem}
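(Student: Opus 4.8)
The plan is to reduce all three properties to a single algebraic identity obtained by eliminating the normal-flux pairings in $b_h$ with the defining relation \eqref{lifting} of the lifting operator, and then to extract unique solvability from coercivity.

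\textbf{Consistency.} Since the exact solution has a single-valued trace, the choice $\hat u = u|_{\Gamma_h}$ gives $u-\hat u=0$ on every $e\subset\partial K$. Hence $\mathbf L_{\partial K}(u-\hat u)=0$ and every edge integral in $j_h$ vanishes, so $j_h(\mathbf u,\mathbf v)=0$ and $b_h(\mathbf v,\mathbf u)=-\sum_{K}\ang{\mathbf n_K\cdot\nabla v,\,u-\hat u}_{\partial K}=0$. Thus $B_h^L(\mathbf u,\mathbf v)=a_h(\mathbf u,\mathbf v)+b_h(\mathbf u,\mathbf v)$, which is the left-hand side of \eqref{w-form}. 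Because \eqref{HDG02} is derived from \eqref{modelproblem} by elementwise integration by parts together with the flux continuity \eqref{conti-flux}, it is valid for every $\mathbf v\in V(h)$, so $B_h^L(\mathbf u,\mathbf v)=(f,v)$ there.

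\textbf{Master identity and coercivity.} Taking $\mathbf w_h=\nabla w|_K$ in \eqref{lifting} yields $\ang{\mathbf n_K\cdot\nabla w,\,v-\hat v}_{\partial K}=(\mathbf L_{\partial K}(v-\hat v),\nabla w)_K$, and symmetrically for the companion term; substituting these into $b_h$ and recognizing the first part of $j_h$ as $\sum_K(\mathbf L_{\partial K}(v-\hat v),\mathbf L_{\partial K}(w-\hat w))_K$ produces
\[
B_h^L(\mathbf v,\mathbf w)=\sum_{K\in\mathcal T_h}\big(\nabla v-\mathbf L_{\partial K}(v-\hat v),\,\nabla w-\mathbf L_{\partial K}(w-\hat w)\big)_K+\sum_{K\in\mathcal T_h}\sum_{e\subset\partial K}\frac{\eta_e}{h_e}\ang{v-\hat v,\,w-\hat w}_e .
\]
This displays $B_h^L$ as the inner product of the two pairs whose induced norm is exactly $\||\cdot\||$. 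The diagonal case $\mathbf v=\mathbf w=\mathbf v_h$ then gives coercivity in the sharp form $B_h^L(\mathbf v_h,\mathbf v_h)=\||\mathbf v_h\||^2$, and a double application of Cauchy--Schwarz to the same identity gives boundedness with constant $1$.

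\textbf{Main obstacle.} The substitution $\mathbf w_h=\nabla w$ in \eqref{lifting} is only licensed when $\nabla w|_K\in U_h(K)^n$. This holds for $\mathbf v_h,\mathbf w_h\in V_h$ (so coercivity is unconditional), but fails for general $\mathbf v,\mathbf w\in V(h)$, where $\nabla w$ is merely in $L^2(K)^n$; this is the one genuinely delicate point and it concerns boundedness. I would handle it by using that $\mathbf L_{\partial K}$ maps into $U_h(K)^n$, so in $(\nabla v,\mathbf L_{\partial K}(w-\hat w))_K$ only the $L^2(K)$-projection of $\nabla v$ onto $U_h(K)^n$ is seen; the identity above then acquires a remainder built from the boundary pairing of the gradient projection errors with the jumps $w-\hat w$, and the crux of the proof is to control this remainder so that the bound by $\||\mathbf v\||\,\||\mathbf w\||$ survives on all of $V(h)$.

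\textbf{Unique solvability.} Finally I would verify that $\||\cdot\||$ is a norm on $V_h$: if $\||\mathbf v_h\||=0$ then $\eta_e>0$ forces $v_h=\hat v_h$ on each $e$, whence $\mathbf L_{\partial K}(v_h-\hat v_h)=0$ and $\nabla v_h=0$ on each $K$; thus $v_h$ is elementwise constant, equal to the single-valued $\hat v_h$ on $\Gamma_h$ with $\hat v_h|_{\partial\Omega}=0$, hence $v_h\equiv0$ and $\hat v_h\equiv0$. Coercivity then makes the homogeneous problem ($f=0$) admit only $\mathbf u_h=0$, so the square finite-dimensional system \eqref{HDGformulation} is nonsingular and has a unique solution for every $f\in L^2(\Omega)$ and every $\{\eta_e\}_e$.
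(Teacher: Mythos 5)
Your proposal is, at its core, the paper's own argument. The paper proves the theorem from exactly your master identity, recorded there in the form $b_h(\mathbf v, \mathbf w) = -\sum_{K\in\mathcal T_h} (\nabla v, \mathbf L_{\partial K}(w - \hat w))_K$: coercivity is the completed square (indeed, as you observe, the equality $B_h^L(\mathbf v_h,\mathbf v_h) = \||\mathbf v_h\||^2$), boundedness is Cauchy--Schwarz applied to the same identity, consistency is the one-line remark that $u - u|_{\Gamma_h} = 0$ on $\Gamma_h$, and unique solvability is deduced from coercivity. You add two details the paper leaves implicit, both correctly: the substitution $\mathbf w_h = \nabla w|_K$ in \eqref{lifting} requires $\nabla\, U_h(K) \subset U_h(K)^n$ (automatic for polynomial local spaces, but a hypothesis nonetheless, needed even for coercivity), and the passage from coercivity to uniqueness requires $\||\cdot\||$ to be definite on $V_h$, which your elementwise-constant gluing argument supplies (it tacitly uses that the mesh is connected through faces, which holds here).

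The step you flag as the main obstacle is a genuine gap --- but in the paper, not only in your proposal: the paper's proof asserts boundedness on all of $V(h)$ via the same substitution, which is not licensed when $\nabla v|_K \notin U_h(K)^n$, and no remainder estimate can rescue the claim as stated, because $\||\cdot\||$ contains no control of the normal traces $\mathbf n_K\cdot\nabla v$ of nondiscrete functions. Concretely, take $\hat w \in L^2_0(\Gamma_h)$ supported on a single interior edge $e$ and $L^2(e)$-orthogonal to the finite-dimensional space $\{\mathbf w_h\cdot\mathbf n_K|_e : \mathbf w_h \in U_h(K)^n\}$, and set $\mathbf w = (0,\hat w)$; then $\mathbf L_{e,K}(\hat w) = 0$, so $\||\mathbf w\||^2$ reduces to the penalty part $\eta_e h_e^{-1}\|\hat w\|_{0,e}^2$, while $B_h^L(\mathbf v,\mathbf w) = \sum_K \ang{\mathbf n_K\cdot\nabla v, \hat w}_{\partial K} - \eta_e h_e^{-1}\ang{v-\hat v,\hat w}_e$. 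Choosing $v \in H^2(\mathcal T_h)$ as a thin boundary-layer function on one element adjacent to $e$, with $v|_e = 0$ but $\mathbf n_K\cdot\nabla v|_e$ correlated with $\hat w$, one checks that the left side stays of order one while $\||\mathbf v\||\,\||\mathbf w\||$ becomes arbitrarily small; so boundedness with constant $1$ (indeed with any constant) fails on $V(h)\times V(h)$. The correct statement is boundedness for pairs whose elementwise gradients lie in $U_h(K)^n$, in particular on $V_h\times V_h$; since the proofs of Theorems \ref{thm:err} and \ref{thm:err2} apply boundedness with the nondiscrete argument $\mathbf u - \mathbf v_h$, one must either strengthen the norm on that argument (adding a term such as $\sum_K h_K\|(I-\Pi_K)\nabla v\|_{0,\partial K}^2$, with $\Pi_K$ the $L^2(K)$-projection onto $U_h(K)^n$) or carry your projection remainder explicitly and absorb it by trace and approximation estimates for the specific function $\mathbf u - \mathbf v_h$. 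Your instinct located the flaw precisely; what it calls for is a restatement of the boundedness property, not a cleverer proof of the stated one.
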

\begin{proof}
 The consistency is trivial since $u -  u|_{\Gamma_h} = 0$ on $\Gamma_h$. 
 The coercivity is a direct consequence of the expression
\[
      b_h(\mathbf v, \mathbf w) = -\sum_K (\nabla v, \mathbf L_{\partial K}(w - \hat w) )_K. 
\]
 Combining this with the Schwarz inequality, we immediately deduce the boundedness.
 Finally, the coercivity implies the uniqueness of \eqref{HDGformulation} and, hence, the 
system of linear equations \eqref{HDGformulation} admits a unique solution.
\end{proof}
 As results of those three properties, we obtain the following a priori error estimates in terms of $\||\cdot\||$.
\begin{theorem} \label{thm:err}
 Let $\mathbf u = (u, u|_{\Gamma_h}) \in V(h)$ with  the exact solution
$u \in H^2(\Omega) \cap H^1_0(\Omega)$ of the Poisson problem \eqref{modelproblem}.
 Suppose that $\{\mathcal T_h\}_h$ satisfies
\begin{equation} \label{inv}
     \tau \le \frac{h_e}{h_K} \quad \forall K \in \mathcal T_h, \forall e \subset \partial K
\end{equation}
 with some positive constant $\tau$.
 Let $\mathbf u_h = (u_h, \hat u_h) \in V_h$ be the solution of 
our HDG scheme \eqref{HDGformulation} for an arbitrary $\{\eta_e\}_e$, $\eta_e>0$.
 Then, we have the error estimates
\begin{align} \label{err-infH1}
     \|| \mathbf u - \mathbf u_h \|| \le 2 \inf_{v_h \in V_h} \|| \mathbf u - \mathbf v_h \||.
\end{align}
\end{theorem}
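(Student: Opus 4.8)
The plan is to derive \eqref{err-infH1} as a Céa-type quasi-optimality estimate, relying entirely on the three structural properties collected in Theorem \ref{thm:CBC}. The first step is to record Galerkin orthogonality. Since the exact pair $\mathbf u = (u, u|_{\Gamma_h})$ lies in $V(h)$, consistency gives $B_h^L(\mathbf u, \mathbf v_h) = (f, v_h)$ for every $\mathbf v_h \in V_h$; subtracting the discrete equation \eqref{HDGformulation} then yields
\[
     B_h^L(\mathbf u - \mathbf u_h, \mathbf v_h) = 0 \quad \forall \mathbf v_h \in V_h.
\]

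Next I would fix an arbitrary $\mathbf v_h \in V_h$ and estimate the discrete difference $\mathbf u_h - \mathbf v_h \in V_h$. Applying coercivity to $\mathbf u_h - \mathbf v_h$, then decomposing the first argument as $\mathbf u_h - \mathbf v_h = (\mathbf u - \mathbf v_h) - (\mathbf u - \mathbf u_h)$ and discarding the orthogonal term $B_h^L(\mathbf u - \mathbf u_h, \mathbf u_h - \mathbf v_h) = 0$, I obtain
\[
     \||\mathbf u_h - \mathbf v_h\||^2 \le B_h^L(\mathbf u_h - \mathbf v_h, \mathbf u_h - \mathbf v_h) = B_h^L(\mathbf u - \mathbf v_h, \mathbf u_h - \mathbf v_h).
\]
Since both arguments now live in $V(h)$, boundedness applies and gives $B_h^L(\mathbf u - \mathbf v_h, \mathbf u_h - \mathbf v_h) \le \||\mathbf u - \mathbf v_h\|| \, \||\mathbf u_h - \mathbf v_h\||$. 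Dividing by $\||\mathbf u_h - \mathbf v_h\||$ leaves $\||\mathbf u_h - \mathbf v_h\|| \le \||\mathbf u - \mathbf v_h\||$.

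Finally, the triangle inequality for $\||\cdot\||$ gives $\||\mathbf u - \mathbf u_h\|| \le \||\mathbf u - \mathbf v_h\|| + \||\mathbf v_h - \mathbf u_h\|| \le 2\||\mathbf u - \mathbf v_h\||$, and taking the infimum over $\mathbf v_h \in V_h$ produces \eqref{err-infH1}. The argument has no genuine obstacle: the one point demanding care is the bookkeeping of the function spaces, namely that coercivity is invoked only on the discrete difference $\mathbf u_h - \mathbf v_h \in V_h$ while boundedness is invoked on $\mathbf u - \mathbf v_h \in V(h)$, and that both properties are measured in the \emph{same} norm $\||\cdot\||$, which is precisely what yields the clean constant $2$. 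I note that the mesh condition \eqref{inv} plays no role in this abstract quasi-optimality bound itself; I expect it to enter only afterward, when the best-approximation error $\inf_{\mathbf v_h} \||\mathbf u - \mathbf v_h\||$ is converted into concrete convergence rates through interpolation estimates, for which the shape regularity of edges relative to elements is required.
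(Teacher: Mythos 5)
Your proposal is correct and follows essentially the same Céa-type argument as the paper: coercivity applied to $\mathbf u_h - \mathbf v_h$, consistency (which you phrase explicitly as Galerkin orthogonality, while the paper folds it into the chain of inequalities), boundedness, and the triangle inequality yielding the constant $2$. Your closing observation that the mesh condition \eqref{inv} is not used in this quasi-optimality bound and only enters later for the concrete rates is also accurate.
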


\begin{proof}
 Let $\mathbf v_h \in V_h$ be arbitrary.  By Theorem \ref{thm:CBC}, we have
\begin{align*}
      \|| \mathbf u_h - \mathbf v_h \||^2 
     &  \le B_h^L(\mathbf u_h - \mathbf v_h, \mathbf u_h - \mathbf v_h)  & \text{(Coercivity)} \\
     &  =   B_h^L(\mathbf u - \mathbf v_h, \mathbf u_h - \mathbf v_h)   & \text{(Consistency)} \\
     &  \le  \||\mathbf u - \mathbf v_h\|| \ \|| \mathbf u_h - \mathbf v_h \||, & \text{(Boundedness)}
\end{align*} 
 which implies that 
\begin{align}
     \|| \mathbf u_h - \mathbf v_h \|| \le  \|| \mathbf u - \mathbf v_h\|| \quad \forall \mathbf v_h \in V_h.
\end{align}
 Using the triangle inequality, we have
\begin{align*}
     \|| \mathbf u - \mathbf u_h \||
     \le \||\mathbf u - \mathbf v_h \|| + \|| \mathbf u_h - \mathbf v_h\|| 
     \le 2 \|| \mathbf u - \mathbf v_h\||.
\end{align*}
 From the above, it follows that 
\begin{equation}
    \|| \mathbf u - \mathbf u_h \|| \le  2 \inf_{\mathbf v_h \in V_h }\|| \mathbf u - \mathbf v_h \||,
\end{equation}
 which implies that the error of the approximate solution is optimal in the norm $\||\cdot\||$. 
\end{proof}
As is stated in \cite{Oikawa01}, we assume that the following approximate properties:
for $v \in H^{k+1}(K)$ there exist positive  constants $C^{\rm e}_{k,s}$ and $C^{\rm f}_{k,s}$ such that
\begin{align}
    \inf_{v_h \in U_h} |v - v_h|_{s,K} 
        \le C^{\rm e}_{k,s} h_K^{k+1-s} |v|_{k+1,K},  \label{approx-elt}\\
    \inf_{\hat  v_h \in \hat U_h}   | v - \hat v_h |_{s,e}
        \le C^{\rm f}_{k,s} h_K^{k+\frac 1 2 -s} |v|_{k+1,K}. \label{approx-face}
\end{align}
 Then we have the error estimates in Theorem \ref{thm:err} are actually of optimal order.

\begin{theorem} \label{thm:err2}
 Under the assumptions in Theorem \ref{thm:err} and the approximate properties 
\eqref{approx-elt} and \eqref{approx-face}, we have, if $u \in H^{k+1}(\Omega) \cap H^1_0(\Omega)$, 
\begin{align}
    \|| \mathbf u- \mathbf u_h \|| \le Ch^k | u |_{k+1,\Omega},  \label{err-H1}\\
    \|  u-  u_h \|_{0,\Omega}  \le Ch^{k+1} | u |_{k+1,\Omega}.             \label{err-L2}       
\end{align} 
\end{theorem}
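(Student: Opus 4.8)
The plan is to prove the two estimates separately: \eqref{err-H1} follows from the quasi-optimality \eqref{err-infH1} of Theorem~\ref{thm:err} combined with an interpolation estimate in the energy norm $\||\cdot\||$, while \eqref{err-L2} is obtained by an Aubin--Nitsche duality argument that exploits the symmetry of $B_h^L$.

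For \eqref{err-H1}, by Theorem~\ref{thm:err} it suffices to bound $\inf_{\mathbf v_h}\||\mathbf u-\mathbf v_h\||$. I would take $\mathbf v_h=(v_h,\hat v_h)$ with $v_h\in U_h$ the element interpolant from \eqref{approx-elt} and $\hat v_h\in\hat U_h$ the edge interpolant from \eqref{approx-face}. Since $\mathbf u=(u,u|_{\Gamma_h})$, on each $e\subset\partial K$ the relevant jump simplifies to $(u-v_h)-(u|_{\Gamma_h}-\hat v_h)=\hat v_h-v_h=(\hat v_h-u)+(u-v_h)$, so the penalty contribution $\sum_e\frac{\eta_e}{h_e}\|\hat v_h-v_h\|_{0,e}^2$ is controlled by \eqref{approx-face} and a multiplicative trace inequality $\|u-v_h\|_{0,e}^2\le C(h_K^{-1}\|u-v_h\|_{0,K}^2+h_K|u-v_h|_{1,K}^2)$ together with \eqref{approx-elt}; each term contributes $O(h_K^{2k}|u|_{k+1,K}^2)$. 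The gradient-minus-lifting term is split by the triangle inequality into $\|\nabla(u-v_h)\|_{0,K}$, bounded by \eqref{approx-elt} with $s=1$, and $\|\mathbf L_{\partial K}(\hat v_h-v_h)\|_{0,K}$.

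The key technical step is a stability bound for the lifting operator: testing \eqref{lifting} with $\mathbf w_h=\mathbf L_{e,K}(\hat v)$, applying the Schwarz inequality, and using the inverse trace inequality $\|\mathbf w_h\|_{0,e}\le Ch_K^{-1/2}\|\mathbf w_h\|_{0,K}$ for polynomials yields $\|\mathbf L_{e,K}(\hat v)\|_{0,K}\le Ch_K^{-1/2}\|\hat v\|_{0,e}$, and hence $\|\mathbf L_{\partial K}(\phi)\|_{0,K}^2\le C\sum_{e\subset\partial K}h_K^{-1}\|\phi\|_{0,e}^2$. Here the mesh condition \eqref{inv}, together with $h_e\le h_K$, gives $h_e\sim h_K$, so $h_e^{-1}$ and $h_K^{-1}$ are interchangeable up to a constant depending on $\tau$. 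Applying this with $\phi=\hat v_h-v_h$ and reusing the edge bound above shows $\|\mathbf L_{\partial K}(\hat v_h-v_h)\|_{0,K}^2\le Ch_K^{2k}|u|_{k+1,K}^2$. Summing over $K$ gives $\||\mathbf u-\mathbf v_h\||\le Ch^k|u|_{k+1,\Omega}$, and \eqref{err-H1} follows; I expect the constant $C$ to depend on the $\eta_e$, which is consistent with the role of Theorem~\ref{thm:CBC} as the unconditional-stability statement.

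For \eqref{err-L2}, introduce the dual problem $-\Delta\phi=u-u_h$ in $\Omega$, $\phi=0$ on $\partial\Omega$, and set $\mathbf z=(\phi,\phi|_{\Gamma_h})\in V(h)$. Convexity of $\Omega$ gives $H^2$-regularity with $\|\phi\|_{2,\Omega}\le C\|u-u_h\|_{0,\Omega}$. Since $B_h^L$ is symmetric, consistency (Theorem~\ref{thm:CBC}) applied to the dual solution gives $B_h^L(\mathbf u-\mathbf u_h,\mathbf z)=(u-u_h,u-u_h)=\|u-u_h\|_{0,\Omega}^2$. Galerkin orthogonality $B_h^L(\mathbf u-\mathbf u_h,\mathbf z_h)=0$ for any $\mathbf z_h\in V_h$ lets me replace $\mathbf z$ by $\mathbf z-\mathbf z_h$, and boundedness then yields $\|u-u_h\|_{0,\Omega}^2\le\||\mathbf u-\mathbf u_h\||\,\||\mathbf z-\mathbf z_h\||$. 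Taking $\mathbf z_h$ the interpolant and invoking the energy-norm interpolation estimate from the previous paragraph with $k=1$ for $\phi\in H^2$ gives $\||\mathbf z-\mathbf z_h\||\le Ch|\phi|_{2,\Omega}\le Ch\|u-u_h\|_{0,\Omega}$; combining this with \eqref{err-H1} produces the extra power of $h$ and proves \eqref{err-L2}. The main obstacle throughout is the lifting stability bound and the careful treatment of the $\mathbf L_{\partial K}$ term inside $\||\cdot\||$; once $h_e\sim h_K$ and the trace inequalities are in place, the remaining estimates are routine.
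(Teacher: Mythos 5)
Your proposal is correct and follows essentially the same route as the paper: the energy estimate \eqref{err-H1} rests on the quasi-optimality of Theorem~\ref{thm:err} together with the lifting stability bound of Proposition~\ref{prop02} (which you reprove identically, testing \eqref{lifting} with $\mathbf w_h=\mathbf L_{e,K}(\hat v)$ and using a trace/inverse inequality), and the $L^2$ estimate \eqref{err-L2} is the same Aubin--Nitsche duality argument exploiting the symmetry of $B_h^L$, Galerkin orthogonality, boundedness, and $H^2$-regularity on the convex domain. The only difference is presentational: the paper packages the lifting bound into the norm comparison $\||\mathbf v\||\le C_{\ref{eqv-norm}}\||\mathbf v\||_h$ and then applies \eqref{approx-elt}--\eqref{approx-face} in $\||\cdot\||_h$, whereas you estimate the terms of $\||\mathbf u-\mathbf v_h\||$ directly, which amounts to the same computation.
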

 In order to prove Theorem \ref{thm:err2}, we need the following auxiliary result.
\refstepcounter{const} \label{lift1}
\begin{proposition} \label{prop02}
 Let $K \in \mathcal T_h$ and $e \subset \partial K$.
 Then we have
\begin{equation}  \label{prop02-1}
    \| \mathbf L_{e,K}(\hat v) \|_{0,K} \le C_{\ref{lift1}}  h_e^{-1/2} \| \hat v \|_{0,e}
    \ \forall \hat v \in L^2(e).    
\end{equation}
\end{proposition}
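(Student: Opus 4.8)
The plan is to test the defining relation of the lifting operator against the lifted function itself, so that $\|\mathbf L_{e,K}(\hat v)\|_{0,K}^2$ appears on the left and a boundary pairing on the right, and then to absorb that boundary term by a discrete trace (inverse) inequality. First I would set $\mathbf u_h = \mathbf L_{e,K}(\hat v)$ and take $\mathbf w_h = \mathbf u_h$ in the defining identity \eqref{lifting}, which gives
\[
    \|\mathbf u_h\|_{0,K}^2 = (\mathbf u_h, \mathbf u_h)_K = \ang{\hat v, \mathbf u_h \cdot \mathbf n_K}_e.
\]
Applying the Cauchy--Schwarz inequality on $e$ and using $|\mathbf n_K| = 1$, the right-hand side is bounded by $\|\hat v\|_{0,e}\,\|\mathbf u_h \cdot \mathbf n_K\|_{0,e} \le \|\hat v\|_{0,e}\,\|\mathbf u_h\|_{0,e}$, so that
\[
    \|\mathbf u_h\|_{0,K}^2 \le \|\hat v\|_{0,e}\,\|\mathbf u_h\|_{0,e}.
\]

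The key remaining step is a discrete trace inequality controlling the value of a discrete function on $e$ by its $L^2$-norm over $K$, namely $\|\mathbf u_h\|_{0,e} \le C\, h_e^{-1/2}\,\|\mathbf u_h\|_{0,K}$ for all $\mathbf u_h \in U_h(K)^n$. Since $U_h(K)$ is a finite-dimensional (polynomial) space, I would prove this by mapping $K$ and $e$ onto a fixed reference element and edge, where all norms on the finite-dimensional space are equivalent, and then scaling back. The element volume scales like $h_K^n$ and the edge measure like $h_e^{n-1}$, and since $h_e \le h_K$ the combined factor $h_e^{\,n-1}h_K^{-n}$ is bounded by $h_e^{-1}$; this is precisely what yields the $h_e^{-1/2}$ power after taking square roots. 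Inserting this estimate into the previous display and cancelling one factor of $\|\mathbf u_h\|_{0,K}$ (trivial if $\mathbf u_h = 0$) gives $\|\mathbf u_h\|_{0,K} \le C\, h_e^{-1/2}\,\|\hat v\|_{0,e}$, which is exactly \eqref{prop02-1}.

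The hard part will be the discrete trace inequality: it is the only place where the finite-dimensional structure of $U_h(K)$ and the geometric regularity of the family $\{\mathcal T_h\}_h$ enter, and obtaining the precise power $h_e^{-1/2}$ requires keeping careful track of how both the element and the edge measures transform under the reference map. By contrast, the choice of test function and the two applications of the Cauchy--Schwarz inequality are entirely routine.
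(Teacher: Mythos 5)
Your proof is correct and takes essentially the same route as the paper: both test \eqref{lifting} with $\mathbf w_h = \mathbf L_{e,K}(\hat v)$, apply the Cauchy--Schwarz inequality on $e$, and conclude via the discrete trace inequality $\| \mathbf L_{e,K}(\hat v) \|_{0,e} \le C h_e^{-1/2} \| \mathbf L_{e,K}(\hat v) \|_{0,K}$. The only difference is cosmetic: you sketch a reference-element scaling proof of that trace inequality, whereas the paper simply invokes it with a constant $C_1$ depending on $U_h(K)$ and $\hat U_h(e)$.
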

\begin{proof}

In \eqref{lifting}, taking $\mathbf w_h =  \mathbf L_{e,K}(\hat v)$ yields 
\begin{align}
     \|  \mathbf L_{e,K}(\hat v) \|_{0,K}^2  
    &= ( \mathbf L_{e,K}(\hat v) ,  \mathbf L_{e,K}(\hat v) )_K    \nonumber \\
    &= \langle \hat v,  \mathbf L_{e,K}(\hat v) \rangle_e           \nonumber  \\ 
    &\le \| \hat v \|_{0,e} \| \mathbf L_{e,K}(\hat v) \|_{0,e}.        \label{prop02-prf1}
\end{align}
 By the trace theorem, there exists $C_1$ such that
\begin{equation} \label{prop02-prf2}
     \| \mathbf L_{e,K}(\hat v) \|_{0,e} \le C_1h_e^{-1/2} \| \mathbf L_{e,K}(\hat v)\|_{0,K}.
\end{equation}
 Here $C_1$ depends on $U_h(K)$ and $\hat U_h(e)$.
 Combining \eqref{prop02-prf1} with \eqref{prop02-prf2}, we obtain \eqref{prop02-1}.
\end{proof}

\begin{proof}[Proof of Theorem \ref{thm:err2}]
\refstepcounter{const} \label{eqv-norm}
 As a consequence of  Proposition \ref{prop02},
it can be proved that there exists a constant $C_{\ref{eqv-norm}}$ such that
\begin{equation} \label{norm}
   \|| \mathbf v \|| \le C_{\ref{eqv-norm}} \|| \mathbf v \||_h \quad \forall \mathbf v \in V(h).
\end{equation}
 From  \eqref{approx-elt} and \eqref{approx-face}, 
 we have   
\begin{equation} 
    \inf_{\mathbf v_h \in V_h}  \|| \mathbf u - \mathbf v_h \||_h 
     \le  C h^k | u |_{k+1,\Omega}.
\end{equation}
 Combining this with \eqref{norm}, we obtain \eqref{err-H1}.
 Next, we prove \eqref{err-L2}.
 Here we define $\psi \in H^2(\Omega) \cap H^1_0(\Omega)$
 as the solution of the adjoint problem
\begin{equation}
    -\Delta \psi = u - u_h \text{ in } \Omega, \quad \psi = 0 \text{ on } \partial\Omega. 
\end{equation}
 Let   $\boldsymbol \psi = (\psi,\psi|_{\Gamma_h})$.  Then,  since $B_h^L$ is symmetric,  we have
\begin{equation}
     B_h^L(\mathbf v, \boldsymbol\psi) = (u-u_h, v) \quad \forall \mathbf v = (v,\hat v) \in V(h).
\end{equation}
 In particular, taking $\mathbf v = \mathbf u - \mathbf u_h$, we have for any $\boldsymbol \psi_h \in V_h$,
\begin{align*}
     \| u - u_h \|_{0,\Omega}^2
    & \le B_h^L(\mathbf u-\mathbf u_h, \boldsymbol\psi ) \\
    & = B_h^L(\mathbf u-\mathbf u_h, \boldsymbol\psi - \boldsymbol \psi_h )  \\
    & \le \||\mathbf u-\mathbf u_h \|| \||\boldsymbol\psi - \boldsymbol \psi_h \|| \\
    & \le C_{\ref{eqv-norm}} \||\mathbf u-\mathbf u_h \||  \||\boldsymbol\psi - \boldsymbol \psi_h \||_h.
\end{align*}
 From \eqref{approx-elt} and \eqref{approx-face}, it follows that 
\begin{equation}
     \||\boldsymbol\psi - \boldsymbol \psi_h \||_h \le C h |\psi|_{2,\Omega}.
\end{equation}
 By the regularity of the adjoint problem, we have
\begin{equation}
      |\psi|_{2,\Omega} \le C \| u - u_h \|_{0,\Omega}.
\end{equation}
 Thus we obtain \eqref{err-L2}.
\end{proof}
\begin{remark} {\rm
 In contrast to our previous results of \cite{Oikawa01},
error estimates in Theorem \ref{thm:err} are valid for any positive parameters $\eta_e$.
 This is one of the advantages of our hybridized DGFEM.
}\end{remark}

\section{Numerical results}

 We now present the numerical results of our method for the following Poisson equation:
\begin{equation}
    \begin{cases}
         & -\Delta u =2\pi^2\sin(\pi x)\sin(\pi y)  \text{ in } \Omega, \\
         &  \quad        u = 0 \text{ on } \partial\Omega,
    \end{cases}
\end{equation}
where $\Omega$ is a unit square.
 We use uniform rectangular meshes and $P_k$--$P_k$ elements ($k=1,2,3$). 
 We computed the approximate solutions for various mesh size $h = 1/N$, see \tablename \  \ref{tbl-error}.
 We take the unity as the penalty parameters for each $e \in \mathcal E_h$.
 We see from \tablename \ \ref{tbl-error} that the $H^1$ and $L^2$ convergence rate of the approximate solutions
are $h^k$ and $h^{k+1}$, respectively.
 \figurename \ref{fig1} and \figurename \ref{fig2} show the approximate solution $u_h$ and $\hat u_h$  in the case  $k=1$ and $N=8$,
respectively. 
\renewcommand\arraystretch{1.2}
\begin{table}[htbp]
\caption{$L^2$ and $H^1$ errors.}
\label{tbl-error}
\smallskip
\center
\begin{tabular}{cccccc} \hline
      &   & \multicolumn{2}{c}{$L^2$}          &  \multicolumn{2}{c}{$H^1$}    \\  \cline{3-6}
 $k$  & $N$   & error    & rate   &  error  & rate  \\ \hline \hline
1 & 4 & 3.23E-02 & 1.96  & 7.15E-01 & 1.01  \\
  & 8 & 8.29E-03 & 1.96  & 3.55E-01 & 1.00  \\
  & 16 & 2.14E-03 & 1.99  & 1.78E-01 & 1.00  \\
  & 32 & 5.39E-04 &  & 8.90E-02 &  \\ \hline
2 & 4 & 4.56E-03 & 3.18  & 1.46E-01 & 2.07  \\
  & 8 & 5.04E-04 & 3.05  & 3.47E-02 & 2.02  \\
  & 16 & 6.08E-05 & 3.01  & 8.58E-03 & 2.00  \\
  & 32 & 7.53E-06 &  & 2.14E-03 &  \\ \hline
3 & 4 & 4.48E-04 & 4.21  & 2.00E-02 & 3.12  \\
  & 8 & 2.43E-05 & 4.07  & 2.30E-03 & 3.03  \\
  & 16 & 1.45E-06 & 4.02  & 2.81E-04 & 3.01  \\
  & 32 & 8.94E-08 &  & 3.49E-05 &  \\ \hline
\end{tabular}
\end{table}

\begin{figure}[!b]
\begin{center} 
\includegraphics[width=.8\textwidth]{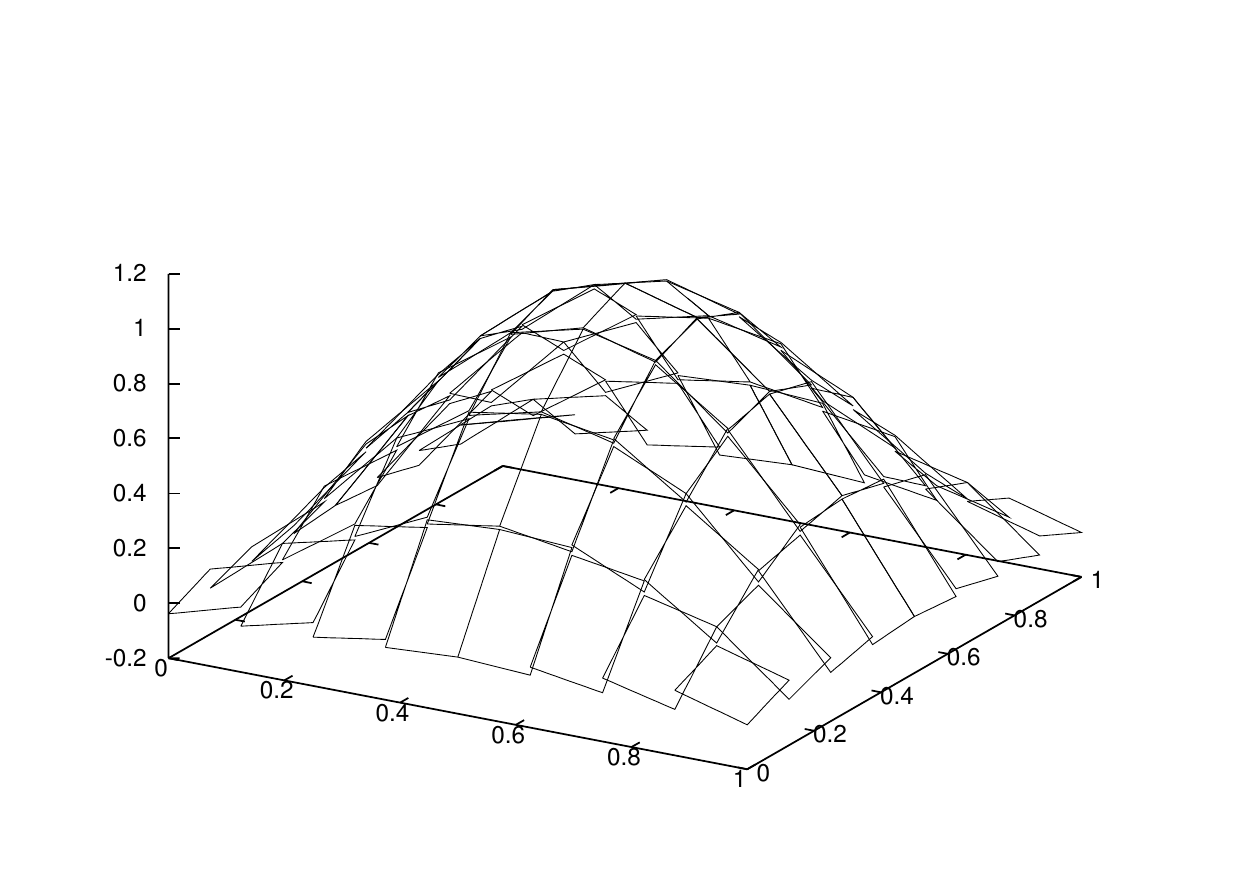}
\end{center}
\caption{The approximate solution $u_h$  in the case $k=1$ and $N=8$.}
\label{fig1}
\end{figure}
%

\begin{figure}[!h]
\begin{center} 
\includegraphics[width=.8\textwidth]{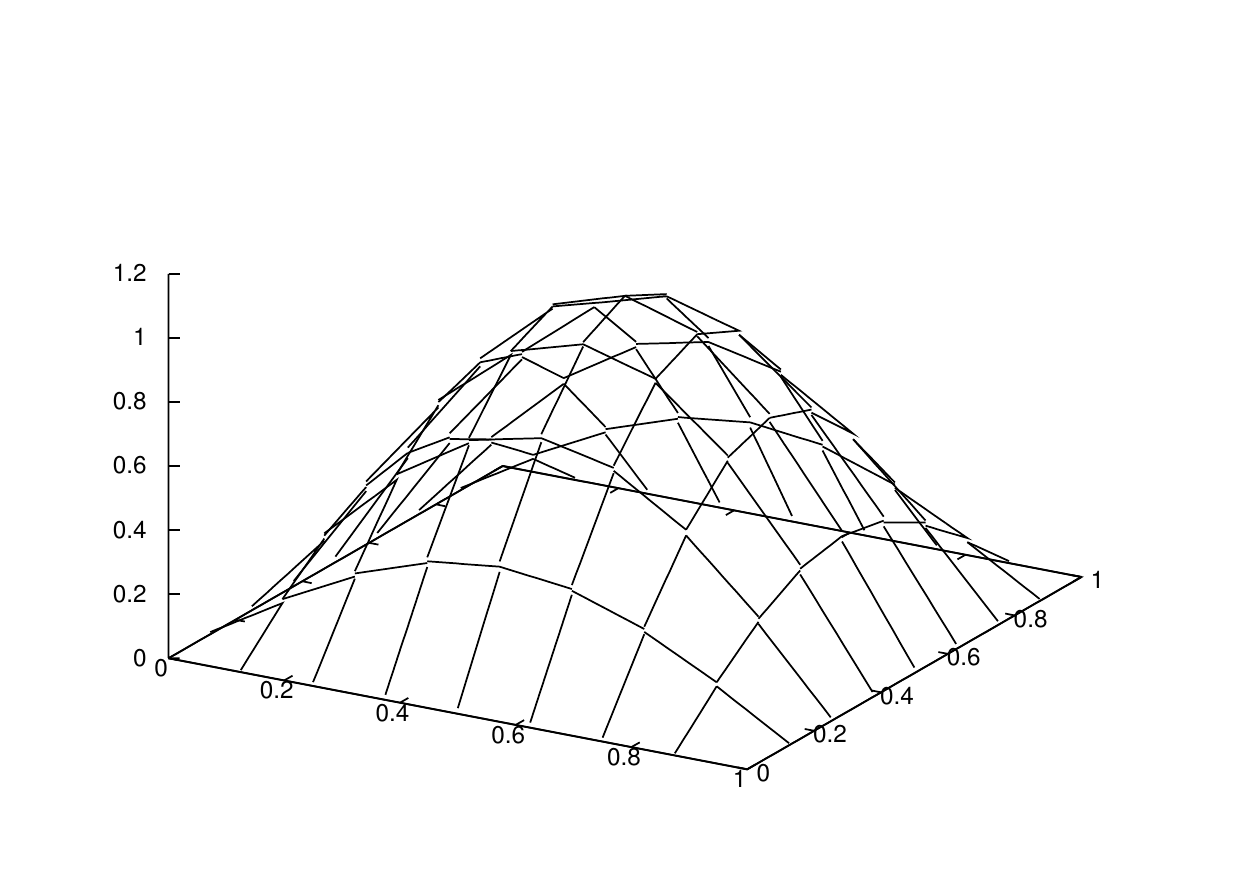}
\end{center}
\caption{The approximate solution $\hat u_h$ in the case $k=1$ and $N=8$.}
\label{fig2}
\end{figure}

\section{Conclusions}
 We have presented a new hybridized DGFEM by using the lifting operator
and examined the stability for arbitrary penalty parameters. 
 Convergence results of optimal order have been proved and confirmed by numerical experiments.
 As a model problem, we have considered only the Dirichlet boundary value problem for the Poisson equation.
 We are interested in application to other problems, for example, 
Neumann boundary value problem, convection-diffusion equations, Stokes system,
and time-dependent problems. They are left here as future study.

\section{Acknowledgement}
I thank Professor Fumio Kikuchi who brought
my attention to the present subject and encouraged me through valuable
discussions. This work is supported by Grants-in-Aid for Scientific
Research, JSPS and by Global COE Program (The Research and
Training Center for New Development in Mathematics, The University
of Tokyo), MEXT, Japan.


%
%


\end{document}